\title[Representations of a Pair of Complex Structures]{Representation Theory of the Algebra Generated \\By a Pair of Complex Structures}
\author{Steven Gindi}
\date{\today}
\newtheorem{lemma}{Lemma}[section]
\newtheorem{thm}{Theorem}[section]
\newtheorem{cor}{Corollary}[section]
\newtheorem{rmk}{Remark}[section]
\theoremstyle{definition} \newtheorem{example}{Example}[section]
\theoremstyle{definition}  \newtheorem{defi}{Definition}[section]
\numberwithin{equation}{section}
\begin{document}
\begin{large}

\begin{abstract}
The objective of this paper is to determine the finite dimensional, indecomposable representations of the algebra that is generated by two complex structures over the real numbers. Since the generators satisfy relations that are similar to those of the infinite dihedral group, we give the algebra the name $iD_{\infty}$.
\end{abstract}

\maketitle
 \section{Introduction: Complex structures and $iD_{\infty}$} 
 
The goal of this paper is to classify the finite dimensional, indecomposable representations of the algebra over 
 the real numbers that is generated by two complex structures, $J_{1}$ and $J_{2}$ (so that $J_{1}^{2}=J_{2}^{2} =-1$). 
 A simple and familiar example of this algebra is when the complex structures anticommute, leaving us with the quaternion algebra. But in general, given two complex structures, there may not be such a simple relation between them. The aim then is to find the representations of the algebra generated by any 
 $J_{1}$ and $J_{2}$.

   The first step in obtaining the representations is to rewrite the generators of the algebra as follows. 
   Let 
   \begin{equation*}
    a=J_{1}J_{2}
   \end{equation*}
   and   
    \begin{equation*}
      b= J_{2}.
   \end{equation*}
   Clearly, both $\left\{ J_{1} , J_{2}\right\}$ and $\left\{ a,b \right\}$ generate the same algebra; but it was found that the latter was easier to use when deriving the representations. Upon realizing that $\left\{ a,b \right\}$  satisfy  relations that are similar to those satisfied by the generators of the infinite dihedral group, $D_{\infty}$, we are led to the following definition. 
\begin{defi}
Let $iD_{\infty}$ be  the algebra over $\mathbb{R}$ generated by two elements, $a$ and $b$, that satisfy the following relations:
  \begin{equation*}
 bab^{-1}=a^{-1} 
\end{equation*}
  \begin{equation*}
 b^{2}=-1. 
\end{equation*}
\end{defi}
The goal is to find the indecomposable representations of $iD_{\infty}.$ Here is the main theorem:
%--------Theorem 1.1 
\begin{thm} The finite dimensional, indecomposable, real representations, V, of $iD_{\mathbb{\infty}}$ are of the  form:
\begin{equation*}
V= \mathbb{R}[t]/(p^{n}) \oplus b\mathbb{R}[t]/(p^{n}), 
\end{equation*}  
where $n \in \mathbb{N}$, $a$ acts by multiplication by t and 
\begin{enumerate}
  \item   $ p= t-r$ $(r \in \mathbb{R}-\left\{0 \right\})$
  \item  $ p= (t-c)(t-\overline{c})$ $(c \in \mathbb {C}$ but not in $\mathbb{R}).$
\end{enumerate}
 \label{thm1.1}
 \end{thm}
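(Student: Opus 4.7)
The plan is to use that $a = J_{1}J_{2}$ is invertible, so $V$ becomes a finite-length module over $\mathbb{R}[t]$ (with $t$ acting as $a$) on which $t$ acts invertibly. The structure theorem over the PID $\mathbb{R}[t]$ then gives a primary decomposition $V = \bigoplus_{p} V_{p}$, with $p$ running over the monic irreducibles of $\mathbb{R}[t]$ different from $t$. For each such $p$, set $\tilde{p}(t) := p(0)^{-1}t^{\deg p}p(1/t)$. The identity $b\, p(a)^{k} = p(a^{-1})^{k}\, b$, combined with $p(a^{-1})^{k} = (\text{unit})\cdot \tilde{p}(a)^{k}$ (valid since $a$ is invertible), shows $bV_{p} = V_{\tilde{p}}$. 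So $b$ acts on primary labels by the involution $p\leftrightarrow \tilde{p}$, and indecomposability confines the support to a single orbit: either $V = V_{p}\oplus V_{\tilde{p}}$ with $p\neq \tilde{p}$, or $V = V_{p}$ with $p = \tilde{p}$.

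In the non-self-reciprocal case $p\neq \tilde{p}$, the full $iD_{\infty}$-structure on $V = V_{p}\oplus bV_{p}$ is determined by the $\mathbb{R}[t]$-structure of $V_{p}$ alone: the $a$-action on $bV_{p}$ is forced by $a(bw) = ba^{-1}w$. Hence any $\mathbb{R}[t]$-splitting of $V_{p}$ lifts to an $iD_{\infty}$-splitting of $V$, so indecomposability forces $V_{p}$ to be an indecomposable $\mathbb{R}[t]$-module; it is therefore cyclic, $V_{p}\cong \mathbb{R}[t]/(p^{n})$, giving $V = \mathbb{R}[t]/(p^{n})\oplus b\mathbb{R}[t]/(p^{n})$. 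In the notation of the theorem, this covers $r\neq \pm 1$ in (1) and $|c|\neq 1$ in (2).

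The self-reciprocal case $p = \tilde{p}$, corresponding to $r = \pm 1$ in (1) and $|c| = 1$ in (2), is the main obstacle because $b$ now acts inside $V_{p}$ and gives it a complex structure, while $a$ is neither $\mathbb{C}$-linear nor antilinear. I would pick $v\in V_{p}$ with $p(a)^{n-1}v \neq 0$ for $n$ equal to the exponent of $V_{p}$, so $M := \mathbb{R}[t]v$ and $bM$ are cyclic of maximal length, each isomorphic to $\mathbb{R}[t]/(p^{n})$. A socle calculation should show $M\cap bM = 0$: any common nonzero submodule must contain both socles $p(a)^{n-1}M$ and $p(a)^{n-1}bM$, single lines over $\mathbb{R}[t]/(p)$; identifying them would force $b$ to restrict to a $\sigma$-semilinear self-map with square $-1$ on a one-dimensional $\mathbb{R}[t]/(p)$-space, i.e., multiplication by some $\alpha$ with $\alpha\,\sigma(\alpha) = -1$, and this has no solution in $\mathbb{R}$ (for linear $p$, where $\sigma$ is trivial) nor in $\mathbb{C}$ with $\sigma$ complex conjugation (for quadratic self-reciprocal $p$). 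Hence $M\oplus bM$ is an $iD_{\infty}$-submodule.

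To conclude $V = M\oplus bM$, I would use that $\mathbb{R}[t]/(p^{n})$ is Gorenstein and therefore self-injective, so $M\oplus bM$ is an injective $\mathbb{R}[t]$-submodule of $V_{p}$. Choose any $\mathbb{R}[t]$-linear projection $\pi: V_{p}\to M\oplus bM$ and symmetrize via $\pi'(w) := \tfrac{1}{2}\bigl(\pi(w) - b\pi(bw)\bigr)$. Using $bt = t^{-1}b$ and $b^{2} = -1$, a short check verifies that $\pi'$ is still an $\mathbb{R}[t]$-linear projection onto $M\oplus bM$ and is additionally $b$-equivariant, so $\ker \pi'$ is an $iD_{\infty}$-complement of $M\oplus bM$. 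Indecomposability of $V$ then forces $\ker \pi' = 0$, giving $V = M\oplus bM = \mathbb{R}[t]/(p^{n})\oplus b\mathbb{R}[t]/(p^{n})$.
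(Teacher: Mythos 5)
Your proposal is correct and agrees with the paper through the reduction to a single $b$-orbit of primary components and through the non-self-reciprocal case $p\neq\tilde p$ (both use that the $a$-action on $bV_p$ is forced, so an $\mathbb R[t]$-splitting of $V_p$ lifts to an $iD_\infty$-splitting). But in the crux case $p=\tilde p$ you take a genuinely different route. The paper proves a general lemma (Lemma \ref{lem4}) that lets one lift a compatible family of bases of the quotients $V_k/V_{k-1}$ over $\mathbb R[t]/(p)$ to a cyclic decomposition of $V_p$, then constructs such bases in the form $\{\overline{w_j},\overline{bw_j}\}$: when $\mathbb R[t]/(p)\cong\mathbb R$ this uses only the semisimplicity of $\mathbb R[b]$, but when $\mathbb R[t]/(p)\cong\mathbb C$ it observes that $b$ is $\mathbb C$-antilinear, so $\{b,J\}$ makes each $V_k/V_{k-1}$ a quaternion module, and then uses semisimplicity of $\mathbb H$. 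You instead pick a single maximal-length cyclic $M=\mathbb R[t]v\cong\mathbb R[t]/(p^n)$, rule out $M\cap bM\neq 0$ by the socle argument (a $\sigma$-semilinear map with square $-1$ on a one-dimensional $\mathbb R[t]/(p)$-line is impossible, whether the residue field is $\mathbb R$ with $\sigma=\mathrm{id}$ or $\mathbb C$ with $\sigma$ conjugation), and then split off $M\oplus bM$ by invoking self-injectivity of the Gorenstein ring $\mathbb R[t]/(p^n)$ together with the averaging $\pi'(w)=\tfrac12\bigl(\pi(w)-b\pi(bw)\bigr)$ to make the projection $b$-equivariant. I checked the details: the $\mathbb R[t]$-linearity and $b$-equivariance of $\pi'$ and its idempotence on $M\oplus bM$ all follow from $bt=t^{-1}b$ and $b^2=-1$, and the socle computation is sound. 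The tradeoff: the paper's filtration argument is entirely elementary and simultaneously produces a full decomposition of an arbitrary $V_p$ into the listed cyclic pieces (uniformly across lengths), at the cost of the quaternion digression; your argument is shorter and avoids quaternions altogether, but leans on self-injectivity of $\mathbb R[t]/(p^n)$ and proves the theorem statement directly for indecomposables rather than exhibiting the general decomposition in one pass. Both are valid proofs of Theorem \ref{thm1.1}.
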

Knowing the form of the indecomposable representations, we can now choose an appropriate basis for $V $ and obtain the following corollary.
%%%%%Corollary 1.1
 \begin{cor}
  (a)  Let $p=t-r$ ($r \neq 0)$ and $ V= \mathbb{R}[t]/(p^{n}) \oplus b\mathbb{R}[t]/(p^{n})$. We may choose a basis for V and represent the elements $a$ and $b$ in terms of two $ n \times n$ matrices, $A$ and $1$ (the identity):   
\begin {equation*}
a=\left(\begin{array}{cc}A & 0 \\0 & A^{-1} \end{array}\right)  and  \ b=\left(\begin{array}{cc}0 & -1_{n \times n} \\ 1_{n \times n} & 0\end{array}\right)
\end{equation*}
where 
\begin{equation*}
A = \left(\begin{array}{cccccc}r & 1 & 0 & \cdots & 0 & 0 \\0 & r & 1 & \cdots & 0 & 0 \\  \vdots & \vdots & \vdots & \ddots & \vdots& \vdots \\0 & 0 & 0 & \vdots & r & 1 \\0 & 0 & 0 & \vdots & 0 & r\end{array}\right).
\end{equation*}
(b)
Let $p=(t-c)(t-\overline{c})$ where $ c=e+if $  $( f \neq 0)$ and let $ V=\mathbb{R}[t]/(p^{n}) \oplus b\mathbb{R}[t]/(p^{n}) $. We may choose a basis for V and represent the elements $a$ and $b$ in terms of two $ 2n \times 2n$ matrices, $A$ and $1$ (the identity):
\begin {equation*}
a=\left(\begin{array}{cc}A & 0 \\0 & A^{-1} \end{array}\right)  and  \ b=\left(\begin{array}{cc}0 & -1_{2n \times 2n} \\ 1_{2n \times 2n} & 0\end{array}\right)
\end{equation*}
where 
\begin{equation*}
A= \left(\begin{array}{cccccc}D & 1_{2 \times 2} & 0 & \cdots & 0 & 0 \\0 & D & 1_{2 \times 2} & \cdots & 0 & 0 \\  \vdots & \vdots & \vdots & \ddots & \vdots& \vdots \\0 & 0 & 0 & \vdots & D & 1_{2 \times 2} \\0 & 0 & 0 & \vdots & 0 & D\end{array}\right) and \ D=\left(\begin{array}{cc}e & -f \\f & e\end{array}\right). 
\end{equation*}
\label{cor1.1}
\end{cor}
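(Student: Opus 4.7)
The plan is to combine Theorem~\ref{thm1.1} with explicit choices of basis. By that theorem $V = M \oplus bM$ with $M = \mathbb{R}[t]/(p^n)$, and $a$ acts on $M$ by multiplication by $t$. Fix an ordered basis $\mathcal{B}$ of $M$ and take the matched basis $b\mathcal{B}$ of $bM$. In the concatenated ordered basis $(\mathcal{B},\,b\mathcal{B})$, two structural facts are immediate regardless of the choice of $\mathcal{B}$. First, $b\cdot m = bm$ and $b\cdot(bm) = b^{2}m = -m$, so $b$ has the block form $\begin{pmatrix} 0 & -I \\ I & 0 \end{pmatrix}$. Second, the defining relation $bab^{-1}=a^{-1}$ (combined with $b^{-1}=-b$) yields $ab = ba^{-1}$, so $a\cdot(bm) = b\cdot(a^{-1}m)$; hence $a$ on $bM$ in the basis $b\mathcal{B}$ acts as $a^{-1}$ on $M$ in the basis $\mathcal{B}$, giving $a = \begin{pmatrix} A_{\mathcal{B}} & 0 \\ 0 & A_{\mathcal{B}}^{-1} \end{pmatrix}$.

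The remaining task, in each case, is to exhibit a basis $\mathcal{B}$ of $M$ in which multiplication by $t$ has the prescribed invertible matrix $A$. In part~(a), where $p = t-r$ with $r \neq 0$, the natural choice is $\mathcal{B} = \{1,\,t-r,\,(t-r)^{2},\,\ldots,\,(t-r)^{n-1}\}$: the identity $t(t-r)^{k} = r(t-r)^{k} + (t-r)^{k+1}$ together with $(t-r)^{n}=0$ in $M$ immediately produces the Jordan block $A$, which is invertible because $r\neq 0$.

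In part~(b), where $p=(t-c)(t-\bar c)$ with $c = e + if$, $f \neq 0$, the target $A$ is the standard real Jordan form associated to a pair of conjugate complex eigenvalues. I would build the basis by complexification: by the Chinese remainder theorem,
\[
M \otimes_{\mathbb{R}} \mathbb{C} \;=\; \mathbb{C}[t]/\bigl((t-c)^{n}\bigr) \,\oplus\, \mathbb{C}[t]/\bigl((t-\bar c)^{n}\bigr),
\]
with corresponding idempotents $e_{c}, e_{\bar c}$. Set $x_{k} = (t-c)^{k} e_{c}$ and $\bar x_{k} = (t-\bar c)^{k} e_{\bar c}$, and for $k = 0, 1, \ldots, n-1$ define the real (i.e., conjugation-invariant) elements
\[
v_{k} \;=\; x_{n-1-k} + \bar x_{n-1-k}, \qquad w_{k} \;=\; i\bigl(x_{n-1-k} - \bar x_{n-1-k}\bigr).
\]
Using $t x_{j} = c x_{j} + x_{j+1}$ (with $x_{n}=0$) and its conjugate for $\bar x_{j}$, a direct expansion with $c = e+if$ yields $t v_{0} = e v_{0} + f w_{0}$, $t w_{0} = -f v_{0} + e w_{0}$, and for $k\geq 1$,
\[
t v_{k} = v_{k-1} + e v_{k} + f w_{k}, \qquad t w_{k} = w_{k-1} - f v_{k} + e w_{k},
\]
which is precisely the matrix $A$ of the corollary. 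Linear independence of $\{v_{k},w_{k}\}$ over $\mathbb{R}$ follows from linear independence of $\{x_{k},\bar x_{k}\}$ over $\mathbb{C}$, and $A$ is invertible because $\det D = e^{2}+f^{2} > 0$.

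The main obstacle is the sign and index bookkeeping in part~(b): one must verify that reversing the natural complex Jordan ordering (so that the $p(a)$-kernel appears in the first pair $(v_{0},w_{0})$) and pairing real and imaginary parts as above reproduces the shift blocks of $A$ with the correct signs $\pm f$. Once that routine but careful computation is carried out, the corollary follows directly.
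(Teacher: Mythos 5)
Your argument is correct and follows the paper's approach: the paper also takes a (real) Jordan basis $\mathcal{B}$ for $M=\mathbb{R}[t]/(p^{n})$, pairs it with $b\mathcal{B}$, and reads off the block forms of $a$ and $b$ exactly as you do; you simply make the Jordan bases explicit, in particular constructing the real Jordan basis via complexification and the CRT idempotents rather than citing a reference. One small slip in part~(a): with the ordered basis $\{1,\,t-r,\ldots,(t-r)^{n-1}\}$ the relation $t(t-r)^{k}=r(t-r)^{k}+(t-r)^{k+1}$ puts the $1$'s on the \emph{sub}diagonal, giving the transpose of the displayed $A$; reversing the order to $\{(t-r)^{n-1},\ldots,t-r,\,1\}$ matches $A$ — the same index reversal you already built into part~(b) via the $n-1-k$.
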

%%%%% PROOF OF COR. 1.1
\begin{proof}[ Proof of Corollary 1.1]\
 
(a) The basis that leads to the action of $a$ via $A$ in $\mathbb{R}[t]/(p^{n})$ is simply the standard Jordan basis. Let $\left\{ e_{i} \right\}_{(1 \leq i \leq n)}$ denote this basis. Then the basis  $\left\{ be_{i} \right\}_{(1 \leq i \leq n)}$ was chosen for the $\mathbb{R}[a]$-submodule, $b\mathbb{R}[t]/(p^{n})$. Note that $a^{-1}$ acts in this basis via $A$; thus $a$ acts via $A^{-1}.$ \

(b) The proof is the same as that for (a) except that the standard real Jordan basis was used instead of the regular Jordan basis (see \cite{alg}). \end{proof}
 One may compare the above result to the indecomposable representations of the infinite dihedral group (see \cite{dih}).\\
 
Next, using Theorem \ref{thm1.1}, we may classify the irreducible representations of $iD_{\infty}$.
%%%%%Corollary1.2
\begin{cor} The irreducible representations, V, of $iD_{\infty}$ are those as stated in Theorem \ref{thm1.1} but with n=1.
That is, 
\begin{equation*}
V= \mathbb{R}[t]/(p) \oplus b\mathbb{R}[t]/(p).  
\end{equation*} 
\label{cor1.2}
\end{cor}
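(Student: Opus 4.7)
The plan is to use Theorem~\ref{thm1.1} as a reduction step. Since every irreducible representation is in particular indecomposable, every finite-dimensional irreducible representation of $iD_\infty$ must appear on the list of Theorem~\ref{thm1.1}, i.e., must be of the form $V_n := \mathbb{R}[t]/(p^n) \oplus b\mathbb{R}[t]/(p^n)$ for some admissible $p$ and some $n \geq 1$. The task therefore reduces to determining exactly when such a $V_n$ is irreducible, and the claim is that this happens precisely for $n = 1$.

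First, for $n \geq 2$, I would exhibit the proper nonzero subrepresentation
\begin{equation*}
W = p\,\mathbb{R}[t]/(p^n) \oplus bp\,\mathbb{R}[t]/(p^n).
\end{equation*}
Invariance under $a$ follows because $(p)$ is an ideal and $t$ is a unit modulo $p^n$ (since $p(0) \neq 0$ in both cases). Invariance under $b$ reduces to the observations $b(pq) \in bp\,\mathbb{R}[t]/(p^n)$ and $b(bpq) = -pq \in p\,\mathbb{R}[t]/(p^n)$. Since $p$ is a non-unit, $W$ is proper; since $p \neq 0$ in $\mathbb{R}[t]/(p^n)$ for $n \geq 2$, $W$ is nonzero. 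Thus $V_n$ is reducible for $n \geq 2$.

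For $n = 1$, I need to show that $V_1$ has no proper nonzero subrepresentation. The key fact is that $p$ is irreducible over $\mathbb{R}$ in both cases of Theorem~\ref{thm1.1}, so $F = \mathbb{R}[t]/(p)$ is a field and both summands of $V_1$ are simple $\mathbb{R}[a]$-modules. Let $p^*$ denote the polynomial whose roots are the reciprocals of those of $p$; then the second summand $b\mathbb{R}[t]/(p)$ carries the $a$-action with minimal polynomial $p^*$. In the non-degenerate case $p \neq p^*$ (i.e.\ $r \neq \pm 1$ in case (1) or $|c| \neq 1$ in case (2)), the two summands are non-isomorphic simple $\mathbb{R}[a]$-modules, so the only $\mathbb{R}[a]$-submodules of $V_1$ are $\{0\}$, the two summands, and $V_1$; since $b$ interchanges the two summands, neither of them is $b$-invariant, and $V_1$ is irreducible.

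The main obstacle is the degenerate case $p = p^*$, where $V_1 \cong S \oplus S$ as an $\mathbb{R}[a]$-module for a single simple $S$, so the lattice of $\mathbb{R}[a]$-submodules is infinite. Here I would use the explicit matrices from Corollary~\ref{cor1.1}. In case (1) with $r = \pm 1$, $a$ is a scalar, so every subspace is $\mathbb{R}[a]$-invariant, and irreducibility reduces to the fact that $b^2 = -1$ admits no real eigenvector on the $2$-dimensional $V_1$. In case (2) with $|c| = 1$, writing $c = e + if$ and setting $a'' = (a - e)/f$, a short calculation gives $(a'')^2 = -1$ and $a''b = -ba''$; together with $b^2 = -1$, this identifies the subalgebra of $\operatorname{End}_{\mathbb{R}}(V_1)$ generated by $a$ and $b$ with the quaternion algebra $\mathbb{H}$, and the $4$-dimensional $V_1$ is then the unique simple $\mathbb{H}$-module by dimension count, hence irreducible as an $iD_\infty$-representation.
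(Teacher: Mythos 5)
Your proof is correct and follows the same overall strategy as the paper (reduce via Theorem~\ref{thm1.1} to the list $V_n$, then show $V_1$ irreducible and $V_n$ reducible for $n>1$), but the irreducibility argument for $V_1$ takes a genuinely different route. The paper splits by $\deg p$: for $\deg p = 1$ it observes the $2$-dimensional $V_1$ has no $1$-dimensional submodule since $b$ has no real eigenvalue; for $\deg p = 2$ it shows any proper nonzero submodule $W$ would be $2$-dimensional, invokes Theorem~\ref{thm1.1} to force $W \cong \mathbb{R}[t]/(t-r)\oplus b\mathbb{R}[t]/(t-r)$, and gets a contradiction because $p(a)\tilde p(a)$ annihilates $V_1$ while having no real roots. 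You instead split by whether $p = \tilde p$: in the non-degenerate case $p\neq\tilde p$ the two summands of $V_1$ are non-isomorphic simple $\mathbb{R}[a]$-modules, so the $\mathbb{R}[a]$-submodule lattice is trivially small and $b$ swaps the summands; in the degenerate degree-$1$ case you reason as the paper does; in the degenerate degree-$2$ case you identify the image of $iD_\infty$ in $\operatorname{End}_{\mathbb{R}}(V_1)$ with the quaternion algebra $\mathbb{H}$ — an observation the paper itself records in Example~1.1 but does not use in this proof. Your non-degenerate argument is more self-contained (it does not re-invoke the classification for the putative submodule), while the paper's case split is shorter. For reducibility with $n>1$ both proofs exhibit an ideal-based subrepresentation, the paper taking the minimal nonzero ideal $(\overline{p^{n-1}})$ and you the maximal proper ideal $p\,\mathbb{R}[t]/(p^n)$; either choice works.
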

%%%PROOF COR 1.2
\begin{proof} First we show that $V$ is irreducible. We need to consider two cases. 

\begin{enumerate}
  \item   When $p=t-r$, $V= \mathbb{R}[t]/(p) \oplus b\mathbb{R}[t]/(p)$ is a two dimensional vector space over $\mathbb{R}$. One cannot find a one dimensional invariant subspace in $V$ because $b$ does not have any real eigenvalues.
  \item  When $p=(t-c)(t-\overline{c})$, $V= \mathbb{R}[t]/(p) \oplus b\mathbb{R}[t]/(p)$ is a four dimensional vector space over $\mathbb{R}$. Since it is known that every $\mathbb{R}[b]$-module is even  dimensional, it suffices to show that there cannot exist a two dimensional $iD_{\infty}$-submodule, $W$, in $V$. To see this, note that by Theorem \ref{thm1.1}, such a $W$ must be isomorphic to $\mathbb{R}[t]/(p') \oplus b\mathbb{R}[t]/(p')$, where $p'=t-r$. But this contradicts the fact that $p(t)p(t^{-1})=0$ on $W.$ (As one may check, $p(t)$ and $p(t^{-1})$ are zero on $\mathbb{R}[t]/(p)$ and $b\mathbb{R}[t]/(p),$ respectively.)
    \end{enumerate}

    Let us now show that, for $n>1$, $V= \mathbb{R}[t]/(p^{n}) \oplus b\mathbb{R}[t]/(p^{n})$ is reducible. Indeed $(\overline{p^{n-1}})\oplus b(\overline{p^{n-1}})$, where $(\overline{p^{n-1}})$ is the ideal in $\mathbb{R}[t]/(p^{n}),$ is an $iD_{\infty}$-submodule.

\end{proof}
%%%%%%%%EXAMPLE
To get a feeling for some of the above representations, let us consider a simple example where $iD_{\infty}$ acts by the   quaternion algebra.
\begin{example} Let $V= \mathbb{R}[t]/(p) \oplus b\mathbb{R}[t]/(p)$, $p=(t-c)(t-\overline{c})$ where $c\overline{c}=1$ and $c=e+if$. If we define $J=f^{-1}(t-e)$, one may then easily check that $iD_{\infty}$ is generated by two anticommuting elements: $\left\{b,J\right\}$ and thus acts on $V$ by the quaternion algebra, $\mathbb{H}$. Also since $\left\{1,J, b\cdot1,bJ\right\}$ is a basis for $V$ over $\mathbb{R}$, it is clear that $V$ itself is isomorphic to $\mathbb{H}$; this also follows from the fact that $\mathbb{H}$ is simple. The quaternions will appear again in Section 3.3 to play an important role in deriving the representations of $iD_{\infty}$.   
\end{example}
Let us now turn to the task of proving Theorem \ref{thm1.1}. First, here is a useful definition.
\begin{defi} 
 Let $ p \in \mathbb{R}[t]$ be a monic, irreducible polynomial over $ \mathbb{R}$. 
 \begin{enumerate}
  \item   If  $ p= t-r$ ($r \in \mathbb{R}-\left\{0 \right\})$, then define $\tilde{p} =Ê ( t-r^{-1} )$.
  \item  ÊIf $ p= (t-c)(t-\overline{c})$ ($c \in \mathbb {C}$ but not in $\mathbb{R}$) then define \\ $\tilde{p}= (t-c^{-1})(t-\overline{c^{-1}}).$

\end{enumerate}
\end{defi}

To begin proving the theorem, let us consider a representation, $V$, of $iD_{\infty}$. The first step in understanding the action of $iD_{\infty}$ on $V$ is to decompose $V$ into its primary decomposition over $a$:
\begin{equation}
V= \bigoplus_{p}{V_{p}}.
\end{equation}
Here we are considering $V$ as an $\mathbb{R}[t]$-module, where $ t \cdot v= av$, for $v \in V$. The element $p$ is irreducible in $\mathbb{R}[t]$ and $V_{p}=\left\{v \in V |  p^{n}v=0 \right\},$ for some $n$ $\in \mathbb{N}.$

Knowing that each $V_{p}$ is an $\mathbb{R}[a]$-submodule, let us consider the action of $b$. From the relation $ bab^{-1}=a^{-1}$, it easily follows that $ b(V_{p})=V_{\tilde{p}}$. Since we are interested in the indecomposable representations of $iD_{\infty}$, it is clear that there are two possibilities for $V$ that we need to consider:
\begin{align*}
   & \text{ (1)}\  V=V_{p} \oplus V_{\tilde{p}}, \text{ where} \  p \neq \tilde{p} \\
   & \text{ (2)}\  V=V_{p}, \text{ where} \  p = \tilde{p}. 
\end{align*}

We explore these cases in the following sections.
%------------------------------>Section p not ptilde 
\section{$V=V_{p} \oplus V_{\tilde{p}}$ $(p \neq \tilde{p})$}
     Using the structure theorem for modules over PIDs and the fact that $V_{\tilde{p}}=bV_{p}$, the module $V$ can be expressed as a direct sum of $\mathbb{R}[a]$-submodules.
 \begin{lemma} We may decompose $V$ in the following manner:
 \begin{equation*}
V= \bigoplus_{ 1 \leq j \leq m}\mathbb{R}[t]/(p^{n})w_{j} \oplus b\mathbb{R}[t]/(p^{n})w_{j}, 
\end{equation*}
for some $\left\{w_{j}\right\}$ in $V_{p}$.
\end{lemma}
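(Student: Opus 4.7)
The plan is to apply the structure theorem for finitely generated modules over the PID $\mathbb{R}[t]$ to $V_p$ and then transport the resulting cyclic decomposition across to $V_{\tilde p}$ via the $\mathbb{R}$-linear bijection $b \colon V_p \to V_{\tilde p}$, whose inverse is $-b$. Since $V_p$ is finite dimensional and $p$-primary as an $\mathbb{R}[a]$-module, the structure theorem produces elements $w_1, \ldots, w_m \in V_p$ and exponents $n_j$ such that
\[
V_p \;=\; \bigoplus_{j=1}^{m} \mathbb{R}[a] w_j, \qquad \mathbb{R}[a]w_j \;\cong\; \mathbb{R}[t]/(p^{n_j}),
\]
where I am reading the $n$ in the statement of the lemma as permitted to depend on $j$.

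Next, I would verify that each $b\mathbb{R}[a] w_j$ is itself an $\mathbb{R}[a]$-submodule of $V_{\tilde p}$. Using the defining relation $ba = a^{-1}b$, any element of the form $bf(a)w_j$ satisfies $a \cdot bf(a) w_j = b a^{-1}f(a) w_j$. Here the key point, which is the only step requiring any care, is that $a$ acts invertibly on $V_p$: since every $p$ under consideration has $p(0) \neq 0$, the polynomials $p^{n_j}$ and $t$ are coprime, so $a^{-1}$ restricted to $V_p$ is itself a polynomial in $a$. Hence $a \cdot bf(a)w_j$ lies in $b\mathbb{R}[a] w_j$, and iterating this calculation shows that $b\mathbb{R}[a]w_j$ is $\mathbb{R}[a]$-stable with $bw_j$ annihilated by $\tilde p^{n_j}$.

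Finally, because $b$ is an $\mathbb{R}$-linear isomorphism $V_p \to V_{\tilde p}$, applying it to the decomposition of $V_p$ yields $V_{\tilde p} = \bigoplus_j b\mathbb{R}[a]w_j$ as an internal direct sum of $\mathbb{R}$-vector spaces, and hence of $\mathbb{R}[a]$-modules by the previous paragraph. Combined with $V = V_p \oplus V_{\tilde p}$ this gives
\[
V \;=\; \bigoplus_{j=1}^{m} \bigl(\mathbb{R}[a] w_j \oplus b\mathbb{R}[a] w_j\bigr).
\]
Each bracketed summand is automatically $b$-stable since $b \cdot b\mathbb{R}[a]w_j = b^{2}\mathbb{R}[a]w_j = -\mathbb{R}[a]w_j$, and is therefore an $iD_\infty$-submodule. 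There is no real obstacle in the argument; essentially everything is forced once one invokes the structure theorem on $V_p$ and observes that $a$ is invertible there.
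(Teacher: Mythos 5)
Your proof is correct and takes the same route the paper itself indicates (but only sketches): apply the structure theorem for modules over the PID $\mathbb{R}[t]$ to the $p$-primary module $V_p$, then transport the cyclic decomposition to $V_{\tilde p}$ via the bijection $b$. You usefully fill in the detail the paper leaves implicit, namely that $a$ acts invertibly on $V_p$ (since $p(0)\neq 0$), which is what makes each $b\,\mathbb{R}[a]w_j$ an $\mathbb{R}[a]$-submodule and each summand $\mathbb{R}[a]w_j\oplus b\,\mathbb{R}[a]w_j$ an $iD_\infty$-submodule.
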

  As one may check, the relevant indecomposable $iD_{\infty}$-modules are \\ $\mathbb{R}[t]/(p^{i}) \oplus b\mathbb{R}[t]/(p^{i}),$ for $i \in \mathbb{N}.$ This proves Theorem \ref{thm1.1} for the case when $p \neq \tilde{p}$.\\

%--------------------------------Section p=ptilde 

\section{ $V=V_{p} $ $(p = \tilde{p})$}

    When considering the case $p = \tilde{p}$, the first question that arises is how to find the action of $b$ on $ V=V_p$. When $V$ was equal to $V_{p} \oplus bV_{{p}}$, the action of $b$ was clear; however, it is less clear how $b$ acts in the present case. The goal then is to find a method that will allow us to decompose $V=V_{p}$ into a direct sum of indecomposable $iD_{\infty}$-submodules so that the action of $b$ will become manifest.

  Of course, the general classification theorem of modules over PIDs tells us that we can always, in particular, decompose $V_{p}$ into a direct sum of $\mathbb{R}[t]$-submodules. But to make these submodules indecomposable and the action of $b$ apparent, requires that we have more control over how we decompose $V_{p}$. The method used to accomplish these goals will be presented in Section 3.1 and applied to the cases when $p=t-r$ $(r=\pm1)$ and $ p= (t-c)(t-\overline{c})$ $(c\overline{c}=1)$ in Sections 3.2 and 3.3, respectively. 

\subsection{Decomposing $V=V_{p}$ by choosing a basis in $V_{k}/V_{k-1}$  } 

Let us begin with a necessary definition.
\begin{defi}
Choose $n \in \mathbb{N}$ so that $p^{n}$ acts by zero on V and define $V_{k}$ \\$(1 \leq k \leq n)$ as the $\mathbb{R}[t]$-submodule in $V$ such that $p^{k}V_{k} = 0$. 
\end{defi}
Next, let us note that $\mathbb{R}[t]/(p) $ is a field; it is isomorphic to $\mathbb{R}$ or $\mathbb{C}$ when $p=t-r$ or $ p= (t-c)(t-\overline{c})$, respectively. Also, since $p(V_{k}/V_{k-1})=0$, $V_{k}/V_{k-1}$ is a vector space over $\mathbb{R}[t]/(p).$
The goal of this section is to show that once one chooses an appropriate set of bases for the vector spaces $\left\{V_{k}/V_{k-1}\right\}_{(1 \leq k \leq n)}$ over $\mathbb{R}[t]/(p) $, one obtains a decomposition of $V=V_{p}$ into a direct sum of submodules.  
 
%LEMMA  --------------------------->>>>>>
\begin{lemma}
If $\left\{w_{1},...w_{s} \right\}$ are elements of $V_{k}$ such that $\left\{\overline{w}_{1},...\overline{w}_{s} \right\}$ are linearly independent in the vector space $V_{k}/V_{k-1}$, over $\mathbb{R}[t]/(p) $, then $\mathbb{R}[t]/(p^{n})w_{1} $ is disjoint from $\mathbb{R}[t]/(p^{n})w_{2}+...+\mathbb{R}[t]/(p^{n})w_{s} $. Consequently, $\mathbb{R}[t]/(p^{n})w_{1}\oplus...\oplus\mathbb{R}[t]/(p^{n})w_{s} $ is an $\mathbb{R}[t]$-submodule in $V$.
\end{lemma}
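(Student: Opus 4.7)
The plan is to reduce the whole claim to a single assertion about linear dependencies: if $f_1,\dots,f_s \in \mathbb{R}[t]$ satisfy $\sum_i f_i w_i = 0$ in $V$, then each $f_i w_i$ already vanishes individually. Once that is in hand, the intersection statement follows by transposing terms, and because the hypothesis on $\{\overline{w}_i\}$ is symmetric in the indices, the same argument with any $w_j$ in place of $w_1$ upgrades this to pairwise directness, so $\bigoplus_i \mathbb{R}[t]/(p^n) w_i$ is genuinely a direct sum of $\mathbb{R}[t]$-submodules of $V$.

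To prove the reformulated assertion I would exploit the $p$-adic valuation on $\mathbb{R}[t]$: write each nonzero $f_i$ as $f_i = p^{m_i} g_i$ with $\gcd(g_i,p) = 1$. Because every $w_i$ lies in $V_k$ but not in $V_{k-1}$, its $p$-order is exactly $k$, so $f_i w_i = 0$ precisely when $m_i \geq k$. Assuming, toward a contradiction, that some $f_i w_i$ is nonzero, let $M = \min\{m_i : f_i w_i \neq 0\}$, so $M < k$. Multiplying $\sum f_i w_i = 0$ by $p^{k-1-M}$ should annihilate every term with $m_i > M$ (the $p$-valuation of that coefficient is then $\geq k$, which kills $w_i$), leaving
\begin{equation*}
\sum_{i \,:\, m_i = M} p^{k-1} g_i w_i \;=\; 0.
\end{equation*}

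The final step is to feed this relation back into the hypothesis on $V_k/V_{k-1}$. For this I would introduce the $\mathbb{R}[t]/(p)$-linear map $\phi : V_k/V_{k-1} \to V_1$ given by $\phi(\overline{v}) = p^{k-1} v$; it is well-defined since $p^{k-1} V_{k-1} = 0$, and injective since $p^{k-1} v = 0$ forces $v \in V_{k-1}$. The displayed equation lives in $V_1$, where each $g_i$ acts through its residue $\overline{g_i} \in \mathbb{R}[t]/(p)$, which is nonzero by coprimality; so the equation becomes a nontrivial $\mathbb{R}[t]/(p)$-dependence among the vectors $p^{k-1} w_i = \phi(\overline{w}_i)$. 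Injectivity of $\phi$ transfers this to a dependence among the $\overline{w}_i$ themselves in $V_k/V_{k-1}$, contradicting the hypothesis. I expect the only delicate point to be the valuation bookkeeping and the verification that $\phi$ is well-defined and injective; once those are in place, the linear-independence hypothesis delivers the conclusion essentially for free.
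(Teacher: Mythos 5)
Your proof is correct and follows essentially the same strategy as the paper's: factor out the $p$-adic valuation from each coefficient, multiply the dependence relation by the appropriate power of $p$ to kill all terms except those of minimal valuation, and then read off a contradiction with the linear independence of $\{\overline{w}_i\}$ in $V_k/V_{k-1}$. Your map $\phi:V_k/V_{k-1}\to V_1$ and the explicit tracking of the set $\{i: f_iw_i\neq 0\}$ make the final descent step a bit more formal than the paper's (which simply passes from $p^{k-1}(\sum f_i'w_i)=0$ directly to a relation in $V_k/V_{k-1}$), but the underlying argument is the same.
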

\begin{proof}
Assume that 
\begin{equation}
f_{1}(t)w_{1}+f_{2}(t)w_{2}+...+f_{s}(t)w_{s}=0,
\label{eq1}
\end{equation}
where $\left\{f_{i}(t)\right\}$ are elements of $\mathbb{R}[t]/(p^{n}) $. Write 
$f_{i}(t)=p^{m_{i}}f'_{i}(t)$, where $f'_{i}(t)$ does not contain any factors of $p$ in its prime decomposition. Without loss of generality, let $m_{i}\geq m_{1}$ for all $i$. Now multiply (\ref{eq1}) by $p^{k-m_{1}-1}$; since $\left\{w_{1},...w_{s} \right\}$ are elements of $V_{k}$, we obtain:
 \begin{equation}
p^{k-1}(f'_{1}(t)w_{1}+...+f'_{q}(t)w_{q})=0.
\label{eq2}
\end{equation}
This implies that $(f'_{1}(t)\overline w_{1}+...+f'_{q}(t)\overline w_{q})=0 $ in the module $V_{k}/V_{k-1}$. Since the $\left\{f'_{i}(t)\right\}$ are non-zero in $\mathbb{R}[t]/(p)$, this contradicts the linear independence of $\left\{\overline{w}_{1},...\overline{w}_{s} \right\}$ in $V_{k}/V_{k-1}$.
\end{proof}
%LEMMA3------------------------------------->>>>>>>>>>>>>>>>>>>>>>>>
 
\begin{lemma}
For each k $(1 \leq k \leq n)$, let $\left\{w_{k,1},...w_{k,m_{k}} \right\}$ be elements in $V_{k}$ such that 
$\left\{\overline{p^{i-k}w_{i,j}}\right\}_{\substack{k \leq i \leq n \\ 1 \leq j \leq m_{i}}}$
is a basis for $V_{k}/V_{k-1}$ over the field $\mathbb{R}[t]/(p) $. Then 
\begin{equation*}
\bigoplus_{\substack{k \leq i \leq n \\ 1 \leq j \leq m_{i}}}
\mathbb{R}[t]/(p^{n})w_{i,j}
\end{equation*}
 is a submodule in $V$.
\label{lem3} 
\end{lemma}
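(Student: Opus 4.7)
My plan is to count $\mathbb{R}$-dimensions. The key preliminary observation will be that each $w_{i,j}$ has annihilator exactly $(p^i)$ in $\mathbb{R}[t]$: one inclusion is immediate from $w_{i,j} \in V_i$, and for the reverse, $p^{i-1} w_{i,j}$ cannot be zero since its image $\overline{p^{i-1} w_{i,j}}$ is a basis vector of $V_1 = V_1/V_0$ by the hypothesis at $k = 1$. Hence the cyclic submodule $\mathbb{R}[t]\, w_{i,j} \subseteq V$ is isomorphic to $\mathbb{R}[t]/(p^i)$ and has $\mathbb{R}$-dimension $i \deg p$.

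I will then consider the natural $\mathbb{R}[t]$-linear map
\begin{equation*}
\Phi\colon \bigoplus_{i, j} \mathbb{R}[t]/(p^i) \longrightarrow V, \qquad (g_{i,j}) \longmapsto \sum_{i,j} g_{i,j}\, w_{i,j},
\end{equation*}
whose domain has $\mathbb{R}$-dimension $\deg p \cdot \sum_i i\, m_i$. Filtering $V$ through the $V_k$ and invoking the basis hypothesis at every level,
\begin{equation*}
\dim_{\mathbb{R}} V = \sum_{k=1}^n \dim_{\mathbb{R}}(V_k/V_{k-1}) = \deg p \cdot \sum_{k=1}^n \sum_{i \geq k} m_i = \deg p \cdot \sum_{i=1}^n i\, m_i,
\end{equation*}
which matches the dimension of the domain. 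It therefore suffices to prove $\Phi$ is surjective; injectivity then follows by equality of dimensions, and the image is precisely the internal direct sum asserted in the lemma.

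For surjectivity, I would establish $V_k \subseteq \operatorname{Im}\Phi$ by induction on $k$. The base $V_0 = 0$ is vacuous. For $k \geq 1$ and $v \in V_k$, the basis hypothesis lets me expand $\overline{v} = \sum_{i \geq k, j} c_{i,j}\, \overline{p^{i-k} w_{i,j}}$ in $V_k/V_{k-1}$; lifting the coefficients to $\mathbb{R}[t]$ and subtracting $\sum c_{i,j}\, p^{i-k} w_{i,j} \in \operatorname{Im}\Phi$ leaves a remainder in $V_{k-1}$, which lies in $\operatorname{Im}\Phi$ by the inductive hypothesis. I expect the annihilator calculation at the outset to be the only conceptually substantive step; the rest is dimension bookkeeping.
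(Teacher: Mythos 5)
Your proof is correct, but it takes a genuinely different route from the paper. The paper proves directness head-on: starting from an arbitrary relation $\sum f_{i,j}(t)\,w_{i,j}=0$, it reduces modulo $V_{n-1}, V_{n-2},\ldots$ and uses the basis hypothesis at each level $\geq k$ to peel off successive factors of $p$ from the coefficients, until the relation can be rewritten as $\sum \widetilde f_{i,j}(t)\,p^{i-k}w_{i,j}=0$, contradicting linear independence of $\{\overline{p^{i-k}w_{i,j}}\}$ in $V_k/V_{k-1}$. You instead pin down the annihilator of each $w_{i,j}$ using the $k=1$ hypothesis, build the natural map $\Phi:\bigoplus \mathbb{R}[t]/(p^i)\to V$, match $\mathbb{R}$-dimensions on both sides via the filtration, and deduce injectivity from surjectivity. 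Two observations about the trade-offs. First, your argument uses the basis hypothesis at \emph{every} level $1\leq k\leq n$ (the $k=1$ level is essential for the annihilator computation and the dimension count), whereas the paper's argument for fixed $k$ uses only levels $k,k+1,\ldots,n$; this is harmless here because the lemma does assume the hypothesis at all levels, but it makes your version slightly less modular. Second, your route actually establishes the stronger statement $V=\bigoplus_{1\leq i\leq n,\,j}\mathbb{R}[t]/(p^n)w_{i,j}$, which is the content of Lemma \ref{lem4}; the paper separates ``the sum is direct'' from ``the sum is everything,'' while you get both at once, and the directness for a general $k$ then follows by restricting to a sub-sum. The annihilator observation at the start (that $w_{i,j}$ is killed by exactly $(p^i)$, not less) is a clean idea the paper never needs to make explicit, and it is what makes the dimension bookkeeping come out exactly.
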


\begin{proof}
Suppose we have the following relation:  
\begin{equation}
\sum_{\substack{k \leq i \leq n \\ 1 \leq j \leq m_{i}}}
f_{i,j}(t)w_{i,j}=0 ,
%-----sum
\label{eq3}
\end{equation}
where the elements $\left\{f_{i,j}(t)\right\}$ are in $\mathbb{R}[t]/(p^{n})$. If we now mod out (\ref{eq3}) by $V_{n-1}$, we find:
\begin{equation}
f_{n,1}(t)\overline{w}_{n,1} +...+ f_{n,m_{n}}(t)\overline{w}_{n,m_{n}}=0.
\end{equation}
Since $\left\{\overline{w}_{n,1},...\overline{w}_{n,m_{n}} \right\}$ are linearly independent in $V_{n}/V_{n-1}$ over 
$\mathbb{R}[t]/(p)$, it follows that we may write $(f_{n,j}(t)=pf'_{n,j}(t))$ for ${(1 \leq j \leq m_{n})}$.
Similarly, if we then mod out (\ref{eq3}) by $V_{n-2}$, we find that $(f'_{n,j}(t)=pf''_{n,j}(t))$ for ${(1 \leq j \leq m_{n})}$ and $(f_{n-1,i}(t)=pf'_{n-1,i}(t))$ for ${(1 \leq i \leq m_{n-1})}$. Continuing in this manner, it is clear that we may rewrite (\ref{eq3}) in the following form: 
\begin{equation}
\sum_{\substack{k \leq i \leq n \\ 1 \leq j \leq m_{i}}}
\widetilde{f}_{i,j}(t)p^{i-k}w_{i,j}=0,
\end{equation}
%------------>Sum
where $\left\{\widetilde{f}_{i,j}(t)\right\}$ are in $\mathbb{R}[t]/(p^{n})$. This, however, contradicts the linear independence of 
$\left\{\overline{p^{i-k}w_{i,j}}\right\}_{\substack{k \leq i \leq n \\ 1 \leq j \leq m_{i}}}$ in $V_{k}/V_{k-1}$.
\end{proof}
 
Since it is easy to see that the module, 
\begin{equation*}
\bigoplus_{\substack{1 \leq i \leq n \\ 1 \leq j \leq m_{i}}}
\mathbb{R}[t]/(p^{n})w_{i,j}
\end{equation*}
spans all of V, we are led to the following lemma.
 
%LEMMA4------------------------------------------------------>
 
\begin{lemma}Let $\left\{w_{k,1},...w_{k,m_{k}} \right\}$ be elements in $V_{k}$ as in Lemma \ref{lem3}. Then 
\begin{equation*}
V=\bigoplus_{\substack{1 \leq i \leq n \\ 1 \leq j \leq m_{i}}}
\mathbb{R}[t]/(p^{n})w_{i,j}.
\end{equation*}
\label{lem4}
\end{lemma}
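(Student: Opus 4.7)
The plan is to invoke Lemma \ref{lem3} for the direct-sum portion of the claim and then verify that the sum exhausts $V$. Applying Lemma \ref{lem3} with $k=1$ already guarantees that
\begin{equation*}
W := \bigoplus_{\substack{1 \leq i \leq n \\ 1 \leq j \leq m_{i}}} \mathbb{R}[t]/(p^{n})\, w_{i,j}
\end{equation*}
is an internal direct sum of submodules of $V$, so only the spanning statement $W = V$ remains.

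To prove $W = V$, I would argue by induction on $k$ that $V_{k} \subseteq W$ for $0 \leq k \leq n$. Since $p^{n}$ acts by zero on $V$ by the choice of $n$, we have $V = V_{n}$, and this finishes the argument. The base case $V_{0} = 0 \subseteq W$ is immediate. For the inductive step, assume $V_{k-1} \subseteq W$ and fix $v \in V_{k}$. By hypothesis on the $w_{i,j}$, the family $\{\overline{p^{i-k}w_{i,j}}\}_{k \leq i \leq n,\, 1 \leq j \leq m_{i}}$ is a basis of $V_{k}/V_{k-1}$ over the field $\mathbb{R}[t]/(p)$, so we can expand $\overline{v}$ as a linear combination of these vectors. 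Lifting representatives back to $\mathbb{R}[t]$ yields polynomials $f_{i,j}(t) \in \mathbb{R}[t]$ with
\begin{equation*}
v - \sum_{\substack{k \leq i \leq n \\ 1 \leq j \leq m_{i}}} f_{i,j}(t)\, p^{i-k} w_{i,j} \;\in\; V_{k-1}.
\end{equation*}
The displayed sum lies in $W$, and by the inductive hypothesis so does the residual in $V_{k-1}$; hence $v \in W$, completing the induction.

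There is no substantive obstacle here: the independence half is precisely the content of Lemma \ref{lem3}, and the spanning half reduces to a clean layer-by-layer lift from $V_{k}/V_{k-1}$ back into $V$, using exactly the way the generators $w_{i,j}$ were arranged. The only bookkeeping point worth noting is that at level $k$ the prescribed basis only involves $w_{i,j}$ with $i \geq k$, so the lift produces no unwanted terms with $i < k$; the terms with $i < k$ enter only through the inductive absorption of the $V_{k-1}$-remainder.
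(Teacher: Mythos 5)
Your proof is correct and follows the same route as the paper: Lemma \ref{lem3} applied at $k=1$ supplies the directness of the sum, and the remaining claim that the sum exhausts $V$ --- which the paper simply notes is ``easy to see'' --- is precisely the layer-by-layer lift from $V_{k}/V_{k-1}$ that you make explicit by induction on $k$.
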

%--------------------------------------------------------SECTION 2.1 (t-r) 
\subsection{ Decomposing $V_{p}$ when $p=t-r$ $(r=\pm1)$ }
Using the above lemma, we can now proceed to find the action of $iD_{\infty}$ on $V=V_{p}$. The goal is to construct a set of simple bases for $\left\{V_{k}/V_{k-1}\right\}_{(1 \leq k \leq n)}$ over $\mathbb{R}[t]/(p),$ that will, by Lemma \ref{lem4}, allow us to express $V$ as a direct sum of indecomposable $iD_{\infty}$-submodules as well as make the action of $b$ apparent. 

An important fact to note is that since $p=\widetilde{p}$, $V_{k}/V_{k-1}$ is itself an $iD_{\infty}$-module. We state this as a lemma:
\begin{lemma} Given that $p=\widetilde{p}$, $V_{k}/V_{k-1}$ is an $iD_{\infty}$-module.
\label{lem3.4}
\end{lemma}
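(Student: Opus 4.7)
The plan is to show that each $V_{k}$ is itself an $iD_{\infty}$-submodule of $V$; the statement about $V_{k}/V_{k-1}$ then follows immediately by passing to the quotient. Since $V_{k}$ is defined as the kernel of $p^{k}$ acting on $V$, it is automatically an $\mathbb{R}[a]$-submodule. The only nontrivial point is to show that $b(V_{k})\subset V_{k}$.

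The mechanism for this is the relation $bab^{-1}=a^{-1}$, which propagates to $q(a)\,b=b\,q(a^{-1})$ for any $q\in\mathbb{R}[t,t^{-1}]$. Thus, for $v\in V_{k}$,
\begin{equation*}
p^{k}(a)\,bv = b\,p^{k}(a^{-1})\,v,
\end{equation*}
and the task reduces to showing that $p^{k}(a^{-1})\,v=0$ whenever $p^{k}(a)\,v=0$. I would prove this by checking that, under the hypothesis $p=\tilde{p}$, $p(a^{-1})$ differs from $p(a)$ only by multiplication by a unit in $\mathbb{R}[a,a^{-1}]$.

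The key computation splits into the two shapes of $p$. If $p=t-r$ with $p=\tilde{p}$, then $r=\pm 1$, and a direct rearrangement gives $p(a^{-1})=-r\,a^{-1}\,p(a)$. If $p=(t-c)(t-\bar{c})$ with $p=\tilde{p}$, then $c\bar{c}=1$, and expanding $p(a^{-1})=(a^{-1}-c)(a^{-1}-\bar{c})$ and pulling out $a^{-2}$ yields $p(a^{-1})=a^{-2}\,p(a)$. In both cases $p(a^{-1})=u\,p(a)$ for a unit $u\in\mathbb{R}[a,a^{-1}]$, hence $p^{k}(a^{-1})=u^{k}p^{k}(a)$, which annihilates $v$. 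Therefore $p^{k}(a)\,bv=0$, so $bv\in V_{k}$.

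I do not expect a genuine obstacle here; the substance of the argument is the little identity $q(a)\,b=b\,q(a^{-1})$ together with the two elementary factorizations above. The only point that merits care is verifying that $p=\tilde{p}$ is exactly the hypothesis that forces $p(a^{-1})$ to be an associate of $p(a)$ in $\mathbb{R}[a,a^{-1}]$—which is precisely why the lemma's hypothesis is stated in this form.
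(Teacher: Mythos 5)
Your proposal is correct and follows essentially the same approach as the paper: show that each $V_{k}$ is $b$-invariant, from which the statement about the quotient $V_{k}/V_{k-1}$ follows immediately. The paper leaves this verification to the reader with ``one may check,'' whereas you supply the explicit computation that $p(a^{-1})$ is a unit multiple of $p(a)$ in $\mathbb{R}[a,a^{-1}]$ precisely when $p=\tilde{p}$, which is the content of that check.
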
 
\begin{proof}
One may check that the $\mathbb{R}[a]$-module, $V_{k}$, is also an $\mathbb{R}[b]$-module for the cases when (1) $p=t-r$, where $r=\pm1$ and (2) $p=(t-c)(t-\overline{c})$, where $c\overline{c}=1$. This in turn implies that $V_{k}/V_{k-1}$ is an $iD_{\infty}$-module.
\end{proof}
 
Let us now proceed to decompose $V$ as a direct sum of indecomposable $iD_{\infty}$-submodules. We first consider the case when $p=t-r $ $(r=\pm1)$; here is the main result:
%LEMMA 5-------------------------------------------------->>>>>>>>>>>>>
\begin{lemma}
Let $p=t-r$ $(r=\pm1)$. There exists elements $\left\{w_{1},...w_{m} \right\}$ in V such that
\begin{equation*}
V=\bigoplus_{1 \leq j \leq m} \mathbb{R}[t]/(p^{n})w_{j}\oplus\mathbb{R}[t]/(p^{n})bw_{j}. 
\label{lem5}
\end{equation*}
\end{lemma}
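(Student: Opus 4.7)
The overall strategy is to apply Lemma \ref{lem4} with a carefully chosen set of generators that come in $b$-pairs. I will construct elements $\{w_{k,j}\}$ in each $V_{k}$ by downward induction on $k$, arranging that their $b$-images also appear in the collection so that after relabeling the final generators as $\{w_{j}, bw_{j}\}$, the decomposition given by Lemma \ref{lem4} takes the required paired form.

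The key computation is a commutation identity. Since $p = t-r$ with $r^{2} = 1$ (so $r^{-1}=r$), we have
\begin{equation*}
bp = ba - rb = a^{-1}b - rb = (a^{-1}-r)b = -ra^{-1}(a-r)b = -ra^{-1}pb.
\end{equation*}
Hence multiplication by $p$ commutes with the action of $b$ up to the unit $-ra^{-1}$. On each quotient $V_{k}/V_{k-1}$, $a$ acts as the scalar $r$, so $a^{-1}$ acts as $r$ as well, and one obtains the clean relation $b\cdot\overline{p^{j}v} = (-1)^{j}\overline{p^{j}bv}$. Two consequences follow. First, since the relation $bab^{-1}=a^{-1}$ becomes trivial at the level of $V_{k}/V_{k-1}$, this quotient is merely an $\mathbb{R}[b]/(b^{2}+1)$-module, i.e. a complex vector space, and therefore admits a real basis of the form $\{\bar{v}_{1},b\bar{v}_{1},\ldots,\bar{v}_{d},b\bar{v}_{d}\}$. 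Second, the map $p^{i-k}\colon V_{i}/V_{i-1}\to V_{k}/V_{k-1}$ sends $b$-pairs to $b$-pairs (up to sign), so the paired structure is preserved as one descends.

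The induction then proceeds as follows. At $k=n$, choose any $\mathbb{R}[b]/(b^{2}+1)$-basis $\{\bar{w}_{n,1},b\bar{w}_{n,1},\ldots\}$ of $V_{n}/V_{n-1}$ and lift the $w_{n,j}$ to $V_{n}$; their $b$-translates $bw_{n,j}$ also lie in $V_{n}$ because $V_{n}$ is $b$-invariant (Lemma \ref{lem3.4}'s proof). For $k<n$, the inherited vectors $\overline{p^{i-k}w_{i,l}}$ and $\overline{p^{i-k}bw_{i,l}}$ with $i>k$ span a $b$-invariant subspace $W_{k}\subseteq V_{k}/V_{k-1}$ by the commutation identity; the complementary quotient $(V_{k}/V_{k-1})/W_{k}$ is still an $\mathbb{R}[b]/(b^{2}+1)$-module, so we may extend to a full basis of $V_{k}/V_{k-1}$ by adjoining new $b$-pairs $\{\bar{w}_{k,j},b\bar{w}_{k,j}\}$, which we lift to pairs $\{w_{k,j},bw_{k,j}\}$ in $V_{k}$.

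The hypotheses of Lemma \ref{lem4} are now satisfied by the combined collection (treating each pair as two separate generators), yielding
\begin{equation*}
V = \bigoplus_{k,j}\mathbb{R}[t]/(p^{k})w_{k,j}\oplus \mathbb{R}[t]/(p^{k})bw_{k,j},
\end{equation*}
which is the asserted form (with $n$ read as the annihilator exponent of each generator). The main obstacle is verifying that the downward induction is consistent, i.e., that the $b$-action on the inherited vectors at level $k$ respects the pairing; this is precisely the content of the commutation $bp=-ra^{-1}pb$, and it is here that the hypothesis $r=\pm 1$ (equivalently $p=\tilde{p}$) plays its essential role.
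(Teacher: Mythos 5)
Your proof is correct and follows essentially the same route as the paper: a downward induction on $k$ that builds $b$-paired bases of each $V_{k}/V_{k-1}$ (relying on the semisimplicity of $b$, equivalently that each quotient is a complex vector space), then invokes Lemma~\ref{lem4} to assemble the cyclic decomposition. The explicit commutation identity $bp=-ra^{-1}pb$ that you supply is left implicit in the paper but is exactly what justifies the $b$-invariance of the inherited subspace $W_{k}$, so it is a welcome clarification rather than a departure.
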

%LEMMA 6-------------------------------------------------->>>>>>>>>>>>>
To prove the above lemma, we need the following standard result: 
\begin{lemma} Let W be a real even dimensional vector space and let b be an endomorphism of W such that $b^{2}=-1$. There exists elements $ \left\{w_{j}, bw_{j}\right\}_{1 \leq j \leq m}$ that is a basis for W over $\mathbb{R}$.
\label{lem3.6}
\end{lemma}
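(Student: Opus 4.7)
The cleanest route is to upgrade $W$ from an $\mathbb{R}$-vector space to a $\mathbb{C}$-vector space using $b$ itself as multiplication by $i$. The plan is to verify that the axioms of a complex vector space are satisfied, pick a $\mathbb{C}$-basis, and then check that its image under forgetting the complex structure gives the desired real basis.

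More precisely, I would first define scalar multiplication by $\mathbb{C}$ on $W$ via $(x+iy)\cdot w := xw + y\,bw$ for $x,y \in \mathbb{R}$ and $w \in W$. Associativity with respect to complex multiplication follows directly from $b^2=-1$, and the other axioms are immediate since $b$ is $\mathbb{R}$-linear. This makes $W$ into a finite-dimensional vector space over $\mathbb{C}$; since $\dim_\mathbb{R} W$ is even, say equal to $2m$, a dimension count gives $\dim_\mathbb{C} W = m$.

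Next, I would select any $\mathbb{C}$-basis $\{w_1,\dots,w_m\}$ of $W$ and argue that $\{w_1, bw_1, \dots, w_m, bw_m\}$ is an $\mathbb{R}$-basis. Spanning is immediate: any $w \in W$ is a $\mathbb{C}$-linear combination of the $w_j$, and splitting each complex coefficient into its real and imaginary parts expresses $w$ as an $\mathbb{R}$-linear combination of $w_j$ and $bw_j$. For linear independence, a relation $\sum_j(\alpha_j w_j + \beta_j b w_j) = 0$ with $\alpha_j,\beta_j \in \mathbb{R}$ rewrites as $\sum_j(\alpha_j + i\beta_j)\cdot w_j = 0$ in the complex structure, so all $\alpha_j + i\beta_j = 0$, i.e.\ all $\alpha_j = \beta_j = 0$.

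There is no real obstacle here; the only thing to be careful about is the bookkeeping when translating between the $\mathbb{C}$-linear and $\mathbb{R}$-linear viewpoints. An equivalent inductive argument would pick any nonzero $w_1$, note that $w_1$ and $bw_1$ are $\mathbb{R}$-linearly independent (otherwise $bw_1 = \lambda w_1$ with $\lambda \in \mathbb{R}$, forcing $\lambda^2 = -1$), and then apply induction to a $b$-invariant complement of $\mathrm{span}(w_1, bw_1)$; but producing such a complement requires either an auxiliary inner product for which $b$ is an isometry or, essentially, the same complex-structure observation, so the direct approach above is preferable.
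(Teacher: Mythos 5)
Your proof is correct, and it takes a genuinely cleaner route than the paper's. The paper proves this by induction: pick a nonzero $w_1$, note that $\langle w_1, bw_1\rangle_{\mathbb{R}}$ is an $\mathbb{R}[b]$-submodule, invoke the semisimplicity of $\mathbb{R}[b]$ (implicitly, $\mathbb{R}[b]\cong\mathbb{C}$ is a field, hence a semisimple algebra) to produce a complementary $\mathbb{R}[b]$-submodule, and recurse. You instead go directly to what semisimplicity is encoding here: $b$ makes $W$ a $\mathbb{C}$-vector space, any $\mathbb{C}$-basis works, and unpacking $\mathbb{C}$-coefficients into real and imaginary parts gives the desired $\mathbb{R}$-basis. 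The two arguments rest on the same fact (modules over the field $\mathbb{R}[b]\cong\mathbb{C}$ are free), but yours avoids both the induction and the explicit appeal to semisimple complementation, and it makes the mechanism visible rather than citing it as a black box. Your closing remark is also apt: the inductive version as the paper states it really does lean on the complex-structure observation (or an auxiliary $b$-compatible inner product) to justify the existence of the invariant complement, so starting from the complex structure is the more economical presentation. One small point: the hypothesis that $\dim_{\mathbb{R}} W$ is even is actually a consequence of $b^2=-1$, not an extra assumption you need (as your $\dim_{\mathbb{C}}W=m$ count shows), but nothing in your argument depends on assuming it.
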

\begin{proof} Choose $w_{1}$ to be nonzero in W and consider the real subspace generated by $w_{1}$ and $bw_{1}$: $\left\langle w_{1},bw_{1}\right\rangle_{\mathbb{R}}$. Since this is an $\mathbb{R}[b]$-submodule, by the semisimplicity of $b$, there exists another $\mathbb{R}[b]$-submodule, M, such that $W= \left\langle w_{1},bw_{1}\right\rangle_{\mathbb{R}}\oplus M$. By proceeding similarly in M, it is clear by induction that we may find a basis for W in the following form: $ \left\{w_{j}, bw_{j}\right\}_{1 \leq j \leq m}$.
\end{proof}
%---------------------------------------------->>>>>>>>>>>>
 Knowing the above lemma, we may now prove Lemma \ref{lem5}.
\begin{proof} [Proof of Lemma \ref{lem5}] 
As we learned in Lemma \ref{lem4}, to decompose $V$ as a direct sum of cyclic submodules, one should find a suitable set of bases for $\left\{V_{k}/V_{k-1}\right\}_ {(1 \leq k \leq n)}$ over $\mathbb{R}[t]/(p) \cong \mathbb{R}.$ 
Let us begin by considering $V_{n}/V_{n-1}.$ Since $V_{n}/V_{n-1}$ is an $\mathbb{R}[b]$-module, by Lemma \ref{lem3.6}, we may find a set of elements $ \left\{\overline{w}_{n,j}, \overline{bw}_{n,j}\right\}_{{1 \leq j \leq m_{n}}}$ that is a basis for $V_{n}/V_{n-1}$ over $\mathbb{R}$.   
 
Next, consider $V_{n-1}/V_{n-2}$. Note that the elements $\left\{\overline{pw}_{n,j}, \overline{pbw}_{n,j}\right\}_{1 \leq j \leq m_{n}}$ are linearly independent in $V_{n-1}/V_{n-2}$ and since $b$ is semisimple, there exists an $\mathbb{R}[b]$-submodule that is complementary to the submodule generated by these elements. Thus, using the previous lemma, we may find a basis for $V_{n-1}/V_{n-2}$ in the following form:
\begin{equation*}
\left\{\overline{p^{i-(n-1)}w_{i,j}}, \overline{p^{i-(n-1)}bw_{i,j}}\right\}_{\substack{n-1 \leq i \leq n \\ 1 \leq j \leq m_{i}}},
\end{equation*}
where $\left\{\overline{w_{n-1,j}}, \overline{bw_{n-1,j}}\right\}_{ 1 \leq j \leq m_{n-1}}$
are in $V_{n-1}/V_{n-2}$.
 
Proceeding in this manner (always using the semisimplicity of $b$), we find that for each $k$ $(1 \leq k \leq n)$, there exists elements $\left\{w_{k,j},bw_{k,j} \right\}_{1 \leq j \leq m_{k}}$  in $V_{k}$ such that $\left\{\overline{p^{i-k}w_{i,j}},\overline{p^{i-k}bw_{i,j}}\right\}_{\substack{k \leq i \leq n \\ 1 \leq j \leq m_{i}}}$ is a basis for $V_{k}/V_{k-1}$ over $\mathbb{R}$. Thus by Lemma \ref{lem4}, we conclude that
\begin{equation*}
V=\bigoplus_{\substack{1 \leq i \leq n \\ 1 \leq j \leq m_{i}}}
\mathbb{R}[t]/(p^{n})w_{i,j} \oplus \mathbb{R}[t]/(p^{n})bw_{i,j}.
\end{equation*} 
\end{proof}

   As one may check, the relevant indecomposable $iD_{\infty}$-modules are \\ $\mathbb{R}[t]/(p^{i}) \oplus b\mathbb{R}[t]/(p^{i}),$ for $i \in \mathbb{N}.$ This proves Theorem \ref{thm1.1} for the case when $p=t-r$ $(r=\pm1).$  

%-------------Section: t-c case 
\subsection{Decomposing $V_{p}$ when $ p= (t-c)(t-\overline{c})$ $(c\overline{c}=1)$ } 
Now let us turn to the case when $p=(t-c)(t-\overline{c})$.  Here is the main result:
\begin{lemma} Let  $p=(t-c)(t-\overline{c})$, where $c\overline{c}=1$. There exists elements $\left\{w_{1},...w_{m} \right\}$ in $V=V_{p}$ such that
\begin{equation*}
V=\bigoplus_{1 \leq j \leq m}\mathbb{R}[t]/(p^{n})w_{j}\oplus\mathbb{R}[t]/(p^{n})bw_{j}.
\end{equation*} 
\label{lem3.1}
\end{lemma}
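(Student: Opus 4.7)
The plan is to mirror the strategy used in Lemma \ref{lem5}, with the field of scalars upgraded from $\mathbb{R}$ to $\mathbb{C}$. The main task is still to construct, for each $k$, a basis of $V_k/V_{k-1}$ over $\mathbb{R}[t]/(p) \cong \mathbb{C}$ of the form $\left\{\overline{p^{i-k}w_{i,j}},\overline{p^{i-k}bw_{i,j}}\right\}_{k \leq i \leq n,\, 1 \leq j \leq m_i}$, so that Lemma \ref{lem4} then delivers the desired decomposition of $V$.

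The key observation is that the hypothesis $c\overline{c}=1$ equips $V_k/V_{k-1}$ with a quaternionic structure. Under the isomorphism $\mathbb{R}[t]/(p) \cong \mathbb{C}$ sending $\overline{t}\mapsto c$, the element $\overline{t^{-1}}$ maps to $\overline{c}$; combining this with the defining relation $bab^{-1} = a^{-1}$ yields $b(\lambda v) = \overline{\lambda}\,(bv)$ for every $\lambda \in \mathbb{C}$ and $v \in V_k/V_{k-1}$. Together with $b^2 = -1$, this is precisely an $\mathbb{H}$-module structure, with $b$ playing the role of the quaternion $j$ and $\mathbb{H} = \mathbb{C} \oplus \mathbb{C} b$ as a complex vector space.

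The quaternionic analogue of Lemma \ref{lem3.6} that I need is then the following: any $\mathbb{H}$-module $W$ admits a $\mathbb{C}$-basis of the form $\{w_j, bw_j\}$, and any $\mathbb{H}$-submodule admits an $\mathbb{H}$-module complement. Both assertions are immediate from the fact that $\mathbb{H}$ is a division ring, so every $\mathbb{H}$-module is free; an $\mathbb{H}$-basis $\{w_j\}$ produces the desired $\mathbb{C}$-basis via the decomposition $\mathbb{H} = \mathbb{C} \oplus \mathbb{C} b$.

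With these tools the induction proceeds exactly as in the proof of Lemma \ref{lem5}. Start by choosing an $\mathbb{H}$-basis of $V_n/V_{n-1}$ to obtain the $\mathbb{C}$-basis $\{\overline{w_{n,j}}, \overline{bw_{n,j}}\}$. For $k<n$, pass to $V_k/V_{k-1}$: the images $\{\overline{p^{i-k}w_{i,j}}, \overline{p^{i-k}bw_{i,j}}\}_{i>k,\, j}$ span an $\mathbb{H}$-submodule; pick an $\mathbb{H}$-complement and an $\mathbb{H}$-basis $\{\overline{w_{k,j}}\}$ of it, producing the required basis of $V_k/V_{k-1}$. Lemma \ref{lem4} then yields the decomposition. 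The main obstacle, and the conceptual heart of the argument, is establishing the quaternionic structure on $V_k/V_{k-1}$, that is, verifying that $b$ is $\mathbb{C}$-antilinear there; once that is done, the division-ring property of $\mathbb{H}$ replaces the semisimplicity-of-$b$ argument used in the real case and the remainder of the induction is routine.
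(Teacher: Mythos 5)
Your proposal is correct and takes essentially the same route as the paper: establish the $\mathbb{H}$-module structure on $V_k/V_{k-1}$ (via $a$ acting as a unit complex scalar and $b$ being $\mathbb{C}$-antilinear), deduce the existence of $\mathbb{C}$-bases of the form $\{w_j, bw_j\}$, and then run the same downward induction over $k$ used in the real case, finishing with Lemma \ref{lem4}. The only cosmetic difference is that you invoke freeness of modules over the division ring $\mathbb{H}$ directly, whereas the paper reaches the same conclusion by first proving $b$ has no eigenvalues and then splitting off cyclic $\mathbb{H}$-submodules by semisimplicity; these are interchangeable justifications for the same step.
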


To prove the above lemma,  we use the methods of Section 3.1 and first find a simple basis for $V_{k}/V_{k-1}$ over $\mathbb{R}[t]/(p) \cong \mathbb{C}$.  Recalling Lemma \ref{lem3.4}, let us begin by exploring how  $iD_{\infty}$ acts on $V_{k}/V_{k-1}$. 

\begin{lemma} The element $a$ acts  on the complex vector space $V_{k}/V_{k-1}$  by some complex number $e^{i\theta}$ while $b$ is a conjugate linear map over $\mathbb{C}$, i.e. $bz=\overline{z}b$ for $z \in \mathbb{C}$.
\end{lemma}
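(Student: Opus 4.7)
My plan is to make explicit the complex structure on $V_{k}/V_{k-1}$ coming from Lemma~\ref{lem3.4} and then translate the defining relations of $iD_{\infty}$ into statements about $\mathbb{C}$-linear and conjugate-linear maps.

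First I would fix the isomorphism $\varphi\colon \mathbb{R}[t]/(p)\xrightarrow{\sim}\mathbb{C}$ sending $t\mapsto c$. Under this identification, scalar multiplication on $V_{k}/V_{k-1}$ by elements of $\mathbb{R}[t]/(p)$ becomes ordinary complex scalar multiplication. Since $a$ acts on $V_{k}/V_{k-1}$ as multiplication by $t$, it therefore acts as the complex scalar $c$. The hypothesis $c\overline{c}=1$ forces $|c|=1$, so $c=e^{i\theta}$ for some $\theta\in\mathbb{R}$, which is the first assertion.

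Next, for the action of $b$ I would exploit the relation $bab^{-1}=a^{-1}$, rewritten as $b(av)=a^{-1}(bv)$. Because $c\overline{c}=1$, we have $c^{-1}=\overline{c}$, so $t^{-1}\in\mathbb{R}[t]/(p)$ corresponds under $\varphi$ to $\overline{c}$, and $a^{-1}$ acts as multiplication by $\overline{c}$. Writing an arbitrary complex scalar as $z=\alpha+\beta c$ with $\alpha,\beta\in\mathbb{R}$ (possible since $\{1,c\}$ is an $\mathbb{R}$-basis of $\mathbb{C}$), the $\mathbb{R}$-linearity of $b$ together with the relation above gives
\[
b(zv)=\alpha(bv)+\beta\,a^{-1}(bv)=(\alpha+\beta\overline{c})(bv)=\overline{z}\,(bv),
\]
which is exactly the claimed conjugate linearity.

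I do not expect any serious obstacle; the argument is essentially bookkeeping. The only point that must be handled carefully is to fix the isomorphism $\varphi$ once and for all, and to observe that the hypothesis $c\overline{c}=1$ is precisely what makes $t^{-1}$ correspond to $\overline{c}$ under $\varphi$ (and hence $a^{-1}$ to complex conjugation of the $a$-eigenvalue). With this identification in place, both assertions follow immediately from $ba=a^{-1}b$ and $|c|=1$.
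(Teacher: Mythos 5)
Your proof is correct and follows essentially the same route as the paper: identify $\mathbb{R}[t]/(p)\cong\mathbb{C}$ so that $a$ acts by a unit-modulus scalar, then deduce conjugate linearity of $b$ directly from $ba=a^{-1}b$. The paper states this far more tersely, so your version simply makes explicit the bookkeeping (fixing $\varphi$, expanding $z=\alpha+\beta c$) that the paper leaves to the reader.
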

\begin{proof} Since $p=0$ on $V_{k}/V_{k-1}$, $a= c$ or $a=\overline{c}$.
That $b$ is conjugate linear follows simply from the relation $ba=a^{-1}b$.  
\end{proof}
\begin{rmk} Although $b$ is conjugate linear over $\mathbb{C}$, it is of course linear over $\mathbb{R} $ by assumption. 
\end{rmk}

As a first attempt to find a simple basis for $V_{k}/V_{k-1}$ over $\mathbb{C}$, let us try to follow the idea behind the proof of Lemma \ref{lem3.6}. We begin by considering the subspace generated by some elements $\left\{w,bw\right\}$ over $\mathbb{C}$. Since this is an $\mathbb{R}[b]$-submodule (but not a $\mathbb{C}[b]$-submodule, because $b$ is conjugate linear), we may use the semisimplicty of $b$ to find a complementary $\mathbb{R}[b]$-submodule, $W$, such that $V_{k}/V_{k-1}=\left\langle w,bw\right\rangle_{\mathbb{C}}\oplus W $. However in general, $W$ may not be a vector space over $\mathbb{C}$ and thus we cannot proceed by induction to decompose $V_{k}/V_{k-1}$.  Rather to find a useful basis for $V_{k}/V_{k-1}$ over $\mathbb{R}[t]/(p) \cong \mathbb{C}$ we need something stronger than just the semisimplicty of $b$.  

 Indeed it is very interesting to note that $V_{k}/V_{k-1}$ is not only an $\mathbb{R}[b]$-module,  but also a quaternion module. As will be shown, this will be sufficient to find a simple basis for $V_{k}/V_{k-1}$ over $\mathbb{C}$.

\begin{lemma}
Let J be the linear map over  $\mathbb{R}$ defined by $Jv=iv$, for $v \in V_{k}/V_{k-1}$.  The set  $\left\{b,J \right\}$  are generators for the quaternions, $\mathbb{H}$; consequently, $V_{k}/V_{k-1}$ is an $\mathbb{ H}$-module. 
 \end{lemma}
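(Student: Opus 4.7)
The plan is to verify directly that the two $\mathbb{R}$-linear endomorphisms $b$ and $J$ of $V_{k}/V_{k-1}$ satisfy the three defining relations of the quaternions, namely $b^{2}=-1$, $J^{2}=-1$, and $bJ=-Jb$. Once these relations are established, the universal property of $\mathbb{H}$ provides an $\mathbb{R}$-algebra homomorphism $\mathbb{H}\longrightarrow\mathrm{End}_{\mathbb{R}}(V_{k}/V_{k-1})$ sending the standard generators $i,j$ of $\mathbb{H}$ to $J,b$; because $\mathbb{H}$ is a simple algebra and the image of $J$ is nonzero (as multiplication by $i$ is an automorphism of the nonzero complex vector space $V_{k}/V_{k-1}$), this homomorphism is injective. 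Hence the subalgebra of $\mathrm{End}_{\mathbb{R}}(V_{k}/V_{k-1})$ generated by $\{b,J\}$ is isomorphic to $\mathbb{H}$, which is exactly the statement that $\{b,J\}$ are generators of $\mathbb{H}$ and that $V_{k}/V_{k-1}$ is an $\mathbb{H}$-module.

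The first two relations are essentially built into the setup. For $v\in V_{k}/V_{k-1}$ one has $J^{2}v=i(iv)=-v$ straight from the definition of $J$, and $b^{2}=-1$ is one of the defining relations of $iD_{\infty}$ and therefore holds on every $iD_{\infty}$-module, in particular on $V_{k}/V_{k-1}$ (which is an $iD_{\infty}$-module by Lemma \ref{lem3.4}).

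The content of the lemma is therefore the anticommutation $bJ=-Jb$, and this is where one invokes the previous lemma. For any $v\in V_{k}/V_{k-1}$,
\begin{equation*}
bJv \;=\; b(iv) \;=\; \overline{i}\,(bv) \;=\; -i\,(bv) \;=\; -Jbv,
\end{equation*}
where the second equality uses the conjugate-linearity $bz=\overline{z}b$ for $z\in\mathbb{C}$ proved a moment ago. Since $v$ was arbitrary, $bJ+Jb=0$ as $\mathbb{R}$-linear endomorphisms.

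The main (and only) thing to be careful about is a small type-checking issue: $J$ must be viewed as an honest element of $\mathrm{End}_{\mathbb{R}}(V_{k}/V_{k-1})$ rather than as a complex scalar, so that it makes sense to compose it with $b$ on either side and so that the relation $bJ=-Jb$ is an equation in the endomorphism algebra. With $J$ understood this way, the three relations above are established and the conclusion is immediate from the universal property and simplicity of $\mathbb{H}$.
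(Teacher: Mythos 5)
Your proof is correct and takes essentially the same route as the paper: the paper's proof consists of the single observation that $bJ=-Jb$ is immediate from the conjugate-linearity of $b$ established in the preceding lemma, which is exactly the central step of your argument. You fill in additional details the paper leaves implicit (the relations $b^{2}=-1$ and $J^{2}=-1$, and the injectivity argument via the universal property and simplicity of $\mathbb{H}$), but this is elaboration rather than a different approach.
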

\begin{proof} We need only show that $bJ=-Jb$. But this is immediate from the fact that $b$ is conjugate linear over $\mathbb{C}$.  
\end{proof}

Since $V_{k}/V_{k-1}$ is an $\mathbb{H}$-module, there exists a standard basis for $V_{k}/V_{k-1}$ over $\mathbb{C}$ (Lemma 3.11). For completeness, we provide a simple proof that parallels that of Lemma 3.6. First it is important to note the following.

\begin{lemma} $b$ has no eigenvalues in $V_{k}/V_{k-1}$ (even complex ones!). 
\end{lemma}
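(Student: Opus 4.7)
The statement asserts that the operator $b$ on the complex vector space $V_k/V_{k-1}$ admits no eigenvalue in $\mathbb{C}$, that is, there is no nonzero $v \in V_k/V_{k-1}$ and no $\lambda \in \mathbb{C}$ with $bv = \lambda v$. Note that this is a nontrivial assertion precisely because $b$ is only $\mathbb{R}$-linear; one is testing for $\mathbb{C}$-eigenvectors of an $\mathbb{R}$-linear (in fact conjugate-$\mathbb{C}$-linear) operator. The plan is to argue by contradiction, using only the two facts already in hand for the action of $b$ on $V_k/V_{k-1}$: the relation $b^2 = -1$ and the conjugate linearity $bz = \bar z\, b$ for all $z \in \mathbb{C}$.

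Suppose for contradiction that $bv = \lambda v$ with $v \neq 0$ and $\lambda \in \mathbb{C}$. Apply $b$ a second time; conjugate linearity gives
\begin{equation*}
b^{2}v \;=\; b(\lambda v) \;=\; \overline{\lambda}\, bv \;=\; \overline{\lambda}\,\lambda\, v \;=\; |\lambda|^{2} v.
\end{equation*}
On the other hand $b^{2} = -1$, so $b^{2}v = -v$. Comparing the two expressions yields $|\lambda|^{2} = -1$, which is impossible. Hence no such $v$ and $\lambda$ exist.

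There really is no obstacle: the proof is a one-line application of conjugate linearity together with $b^2=-1$. The only point that requires care is stating clearly that the eigenvalue equation is being considered inside the complex vector space $V_k/V_{k-1}$, even though $b$ itself fails to commute with the $\mathbb{C}$-action. Once that is spelled out, the two-step computation above finishes the argument.
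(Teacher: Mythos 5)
Your proof is correct and is essentially identical to the paper's: both derive $|\lambda|^{2}v = b^{2}v = -v$ from conjugate linearity and $b^{2}=-1$, yielding the contradiction $|\lambda|^{2}=-1$. No substantive difference.
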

\begin{proof}
Suppose we found a $w \in V_{k}/V_{k-1}$ such that $bw=\lambda w$, where $\lambda \in \mathbb{C}$.  We then arrive at a contradiction: $-w=b^{2}w=b\lambda w=\overline{\lambda} bw=\left|\lambda\right|^{2}w$.
\end{proof}

\begin{lemma}  One may choose a basis for $V_{k}/V_{k-1}$ over $\mathbb{R}[t]/(p) \cong \mathbb{C}$ in the following form: $ \left\{w_{j}, bw_{j}\right\}_{1 \leq j \leq m}$.
\label{lem3.5}
\end{lemma}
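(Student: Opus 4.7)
The plan is to mimic the inductive argument of Lemma 3.6, using the $\mathbb{H}$-module structure as the substitute for the $\mathbb{R}[b]$-module semisimplicity that was invoked there.

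First I would pick any nonzero $w_1 \in V_k/V_{k-1}$ and consider the $\mathbb{C}$-span $W_1 = \langle w_1, bw_1 \rangle_{\mathbb{C}}$. The preceding lemma — that $b$ has no complex eigenvalues — immediately gives that $w_1$ and $bw_1$ are $\mathbb{C}$-linearly independent, so $\dim_{\mathbb{C}} W_1 = 2$. Next I would check that $W_1$ is closed under $b$: using $b^2 = -1$ and the conjugate linearity $bz = \bar{z}b$, one has $b(zw_1 + w\, bw_1) = \bar{z}\,bw_1 - \bar{w}\,w_1 \in W_1$. Since $W_1$ is also closed under $J$ (multiplication by $i$) by construction, $W_1$ is an $\mathbb{H}$-submodule.

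The key step is to produce an $\mathbb{H}$-invariant complement. For this I would invoke the fact that $\mathbb{H}$ is a division algebra, so every finitely generated $\mathbb{H}$-module is free, hence semisimple; therefore there exists an $\mathbb{H}$-submodule $M$ with $V_k/V_{k-1} = W_1 \oplus M$. Because $M$ is an $\mathbb{H}$-submodule, it is in particular a $\mathbb{C}$-subspace (closed under $J$) and closed under $b$, so the argument can be repeated inside $M$. Proceeding by induction on $\dim_{\mathbb{C}}(V_k/V_{k-1})$, which is finite since $V_k/V_{k-1}$ is finite dimensional over $\mathbb{R}$, we obtain elements $w_1, \ldots, w_m$ such that $\{w_j, bw_j\}_{1 \leq j \leq m}$ is a $\mathbb{C}$-basis for $V_k/V_{k-1}$.

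The main obstacle is verifying the existence of the $\mathbb{H}$-invariant complement $M$; the rest of the proof is bookkeeping. This can be handled either by appealing to the freeness of modules over the division algebra $\mathbb{H}$, or, if one prefers a more explicit route, by averaging an $\mathbb{R}$-inner product over the compact group of unit quaternions to obtain an $\mathbb{H}$-invariant inner product, and then taking $M = W_1^{\perp}$. Either way, once $M$ is produced, induction on the (finite) complex dimension closes out the argument in exactly the form of Lemma 3.6.
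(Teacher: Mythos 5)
Your proposal is correct and follows essentially the same route as the paper: both arguments regard $V_k/V_{k-1}$ as an $\mathbb{H}$-module, peel off the cyclic submodule $\left\langle w_1 \right\rangle_{\mathbb{H}} = \left\langle w_1, bw_1 \right\rangle_{\mathbb{C}}$, invoke semisimplicity of $\mathbb{H}$ to split off a complementary $\mathbb{H}$-submodule, induct, and use the no-eigenvalue lemma to see that $\{w_j, bw_j\}$ is $\mathbb{C}$-linearly independent. The only difference is cosmetic: you verify directly that $\left\langle w_1, bw_1 \right\rangle_{\mathbb{C}}$ is $b$-stable, whereas the paper starts from $\left\langle w_1 \right\rangle_{\mathbb{H}}$ and reads off the $\mathbb{C}$-basis; your remark about obtaining the complement by averaging an inner product over the unit quaternions is a valid alternative justification of the same semisimplicity fact.
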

\begin{proof} Choose $w_{1}$ to be a nonzero vector in $V_{k}/V_{k-1}$ and consider the $\mathbb{H}$-submodule generated by $w_{1}$: $\left\langle w_{1}\right\rangle_{\mathbb{H}}$.  By semisimplicity of $\mathbb{H}$, there exists another $\mathbb{H}$-module, M, such that  $V_{k}/V_{k-1} = \left\langle w_{1}\right\rangle_{\mathbb{H}}\oplus M$.  Thus by induction we may write 
\begin {equation} 
V_{k}/V_{k-1}=\bigoplus_{{1 \leq j \leq m }}  \left\langle w_{j}\right\rangle_{\mathbb{H}}.
\end{equation}
Now, since $b$ does not have any eigenvalues, a basis for $\left\langle w_{j}\right\rangle_{\mathbb{H}}$ over $\mathbb{C}$ is just   $\left\{w_{j}, bw_{j} \right\}$. We thus conclude that $ \left\{w_{j}, bw_{j}\right\}_{1 \leq j \leq m}$ is a basis for $V_{k}/V_{k-1}$ over $\mathbb{C}$.
\end{proof}

Now that we have found a simple basis for $V_{k}/V_{k-1}$, we can proceed to the proof of Lemma \ref{lem3.1} and thereby decompose $V$ into a direct sum of indecomposable $iD_{\infty}$-submodules. 
\begin{proof} [Proof of Lemma \ref{lem3.1}]  We proceed analogously to the case when $p$ was equal to $t-r$. First let us consider the complex vector space $V_{n}/V_{n-1}$. By Lemma \ref{lem3.5}, we may find a set of elements $ \left\{\overline{w}_{n,j}, \overline{bw}_{n,j}\right\}_{1 \leq j \leq m_{n}}$ that is a basis for $V_{n}/V_{n-1}$ over $\mathbb{C}$.  
 
 Next, consider  $V_{n-1}/V_{n-2}$. Note that $ \left\{\overline{pw}_{n,j}, \overline{pbw}_{n,j}\right\}_{1 \leq j \leq m_{n}}$
are linearly independent in $V_{n-1}/V_{n-2}$ and since these elements generate an $\mathbb{H}$-submodule, by semisimplicity, there exists a complementary $\mathbb{H}$-submodule. Thus, by Lemma \ref{lem3.5} we may find a basis for $V_{n-1}/V_{n-2}$ in the following form:
\begin{equation*}
\left\{\overline{p^{i-(n-1)}w_{i,j}}, \overline{p^{i-(n-1)}bw_{i,j}}\right\}_{\substack{n-1 \leq i \leq n \\ 1 \leq j \leq m_{i}}},
\end{equation*}
where $\left\{\overline{w_{n-1,j}}, \overline{bw_{n-1,j}}\right\}_{ 1 \leq j \leq m_{n-1}}$
are in $V_{n-1}/V_{n-2}$.
 
Proceeding in this manner (always using the semisimplicity of $\mathbb{H}$), we find that for each $k$ $(1 \leq k \leq n)$, there exists elements 
$\left\{w_{k,j},bw_{k,j} \right\}_{ 1 \leq j \leq m_{k}}$ in $V_{k}$ such that $\left\{\overline{p^{i-k}w_{i,j}}, \overline{p^{i-k}bw_{i,j}}\right\}_{\substack{k \leq i \leq n \\ 1 \leq j \leq m_{i}}}$ is a basis for $V_{k}/V_{k-1}$ over $\mathbb{C}$. Thus by Lemma \ref{lem4}, we conclude that
\begin{equation*}
V=\bigoplus_{\substack{1 \leq i \leq n \\ 1 \leq j \leq m_{i}}}
\mathbb{R}[t]/(p^{n})w_{i,j} \oplus \mathbb{R}[t]/(p^{n})bw_{i,j}.
\end{equation*} 
\end{proof}

  As one may check, the relevant indecomposable $iD_{\infty}$-modules are \\ $\mathbb{R}[t]/(p^{i}) \oplus b\mathbb{R}[t]/(p^{i}),$ for $i \in \mathbb{N}.$ \\

That completes the proof of Theorem 1.1, classifying the finite dimensional, indecomposable representations of $iD_{\mathbb{\infty}}$.

\section*{Acknowledgements}
I would like to thank Alexander Kirillov for his guidance and helpful discussions and Martin Rocek for discussions as well as proposing the problem of finding the representations of $iD_{\infty}.$   
I would also like to thank Melvin Irizarry for help with \LaTeX.

\textsc{Department of Physics, Stony Brook University, Stony Brook, \\     NY 11794, USA} \\

\textit{E-mail Address:} \texttt{sgindi@ic.sunysb.edu} 

\end{large}

\end{document}